\newcommand{\mathsym}[1]{{}}
\newtheorem{thm}{Theorem}[section]
\newtheorem{cor}[thm]{Corollary}
\newtheorem{lem}[thm]{Lemma}
\newtheorem{prop}[thm]{Proposition}
\theoremstyle{definition}
\numberwithin{equation}{section}
\theoremstyle{example}
\newcommand{\be}{\begin{equation}}
\newcommand{\ee}{\end{equation}}
\newcommand{\bag}{\begin{eqnarray}}
\newcommand{\eag}{\end{eqnarray}}
\newcommand{\ban}{\begin{eqnarray*}}
\newcommand{\ean}{\end{eqnarray*}}
\newcommand{\ba}{\begin{aligned}}
\newcommand{\ea}{\end{aligned}}
\newcommand{\bpf}{\begin{proof} }
\newcommand{\epf}{\end{proof} }
\begin{document}
\title{The Continuity Equation of the Gauduchon Metrics}
\author{Tao Zheng}
\address{School of Mathematics and Statistics, Beijing Institute of Technology, Beijing 100081, China}
\email{zhengtao08@amss.ac.cn}
\subjclass[2010]{53C55, 35J60, 32W20, 58J05}
\keywords{continuity equation, Gauduchon metric, maximal time existence,  Chern-Ricci form}
\begin{abstract}
We study the continuity equation of the Gauduchon metrics and establish its interval of maximal existence, which  extends the continuity equation of the K\"ahler metrics introduced by La Nave \& Tian for and of the Hermitian metrics introduced by Sherman \& Weinkove.
Our method is based on the solution to the Gauduchon conjecture by Sz\'ekelyhidi, Tosatti \& Weinkove.
\end{abstract}
\maketitle

\section{Introduction}\label{secintro}
Let $(M,J,g_0)$ be a closed (i.e., compact without boundary) Hermitian manifold with $\dim_{\mathbb{C}}M=n$  and the complex structure $J$, where $g_0$ is the   Hermitian metric, i.e., a Riemannian metric with $g_0(JX,JY)=g_0(X,Y)$ for all vector fields $X,\,Y\in \mathfrak{X}(M)$ (set of all the global and smooth vector fields). Then we can define a real $(1,1)$ form $\omega_0$  by
\begin{equation*}
\omega_0(X, Y):=g_0(JX, Y),\quad \forall\;X,\,Y\in \mathfrak{X}(M).
\end{equation*}
This form $\omega$ is determined uniquely by $g$ and vice versa. In what follows, we will not distinguish these two terms and we will not emphasize the complex structure $J$.

The Hermitian metric $\omega_0$ is called
\emph{K\"{a}hler} if
$
\mathrm{d}\omega_0=0,
$
\emph{Astheno-K\"{a}hler} (see \cite{jostyau,jostyauerratum}) if
$
\partial\bar\partial\omega_0^{n-2}=0,
$
\emph{balanced} (see \cite{michelsohn}) if
$
\mathrm{d}\omega_0^{n-1}=0,
$
\emph{Gauduchon} (see \cite{gauduchon1}) if
$
\partial\bar\partial\omega_0^{n-1}=0,
$
and \emph{strongly Gauduchon} (see \cite{popovici}) if
$
\bar\partial\omega_0^{n-1}\;\text{is}\;\partial\textrm-\text{exact}.
$

La Nave \& Tian \cite{lanavetianmathann} (cf. \cite{rubinsteinaim2008}) investigate a family of K\"ahler metrics
$\omega := \omega(s)$ satisfying the  continuity equation
\begin{equation}
\label{eqn: ce}
  \omega = \omega_0 - s \mathrm{Ric} (\omega), \quad \textrm{for}\quad s \geq 0,
\end{equation}
where
 $\mathrm{Ric}(\omega) = -\sqrt{-1}\partial\bar\partial \log \det (g_{i\bar j})$
is the Ricci form of the K\"ahler metric $\omega = \sqrt{-1} g_{i\bar{j}} \mathrm{d}z^i \wedge \mathrm{d}\bar{z}^j$ which is a real $(1,1)$ form.
This continuity equation can be viewed as an alternative to the K\"ahler-Ricci flow in carrying out the Song-Tian analytic minimal model program \cite{songtianinventiones2007,songtianinventiones2017}.
The Ricci curvature along the path is automatically bounded from below. This fact leads to several developments
\cite{fuguosongcrelle2019,lanavetianzhangajm2017,
liyanjga2018,zhangzhangimrn2019,zhangyashanmathann,zhangzhangimrn2020}.

Sherman \& Weinkove \cite{shermannweinkovejga2020} extend the continuity equation in \cite{lanavetianmathann} to Hermitian metrics and establish its interval of maximal existence and they also illustrate the behavior of this equation in the case of elliptic bundles over a curve of genus at least two.   This equation is closely related to the Chern-Ricci flow first introduced by \cite{gillcag} and studied deeply by Tosatti and Weinkove (and Yang) \cite{twjdg,twsurface,twymathann} (see also \cite{ftwzjfa,gillmmp,gillscalar,laurent,lr,niecrf2017,to2,yangcrf2016,zhengcjm}).

Li \& Zheng \cite{lizhengalmost} study the continuity equation of a family of almost Hermitian metrics and establish its interval of maximal existence. This continuity equation is closely related to the almost Chern-Ricci flow introduced by Chu, Tosatti \& Weinkove \cite{ctwjems} and furthermore studied by \cite{chu1607,zhengjga}.

In this paper we  investigate a natural analogue of (\ref{eqn: ce}) for the Gauduchon metrics, i.e., the continuity equation given by
\begin{equation}
\label{continuitygau}
\omega^{n-1}:=\omega_0^{n-1} - s (n-1)\left(\mathrm{Ric}(\omega)\wedge\alpha^{n-2}
-\Re\left(\sqrt{-1}\partial\log\frac{\omega^n}{\alpha^n}
\wedge\bar\partial(\alpha^{n-2})\right)\right)>0,\;\mbox{for}\;s\geq0,
\end{equation}
where $\alpha$ is the Gauduchon metric and $\mathrm{Ric}(\omega)$ is the  Chern-Ricci form of $\omega$ given by \eqref{chernricci} which coincides exactly with the Ricci form of $\omega$ when  $\mathrm{d}\omega=0$. If $\omega_0$ is the Gauduchon metric, then so is $\omega$ given by \eqref{continuitygau}.

When $n=2$, the continuity equation \eqref{continuitygau} is the same as the one in Sherman \& Weinkove \cite{shermannweinkovejga2020} in the Hermitian case and the one in La Nave \& Tian \cite{lanavetianmathann} in the K\"ahler case.
\begin{thm}
\label{mainthm}
Let $(M,\omega_0)$ be a closed Hermitian manifold with $\dim_{\mathbb{C}}M=n$ and $\omega_0$ a Hermitian metric.
Then there exists a unique family of Hermitian metrics
	$\omega = \omega(s)$ satisfying \eqref{continuitygau} for each $s\in[0,T)$,
where $T$ is defined by
\begin{align*}
T:=&\sup\Big\{s\geq0:\;\exists\; \varphi\in C^{\infty}(M,\,\mathbb{R}) \text{ such that}\\
&\quad\quad\quad\quad\quad\quad\Phi_s+\sqrt{-1}\partial\bar\partial\varphi\wedge\alpha^{n-2}
+\mathrm{Re}\left[\sqrt{-1}\partial\varphi\wedge\bar\partial(\alpha^{n-2})\right]>0\Big\},\nonumber
\end{align*}
with
\begin{equation*}
\Phi_s:=\omega_0^{n-1}-s(n-1)\left(\mathrm{Ric}(\omega_0)\wedge\alpha^{n-2}
-\Re\left(\sqrt{-1}\partial\log\frac{\omega_0^n}{\alpha^n}\wedge\bar\partial(\alpha^{n-2})\right)\right).
\end{equation*}
\end{thm}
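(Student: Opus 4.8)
The plan is to reduce the continuity equation \eqref{continuitygau}, at each fixed $s$, to a single scalar fully nonlinear elliptic equation of the type solved by Sz\'ekelyhidi, Tosatti \& Weinkove in their proof of the Gauduchon conjecture, and then to run a continuity method in the parameter $s$.

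\emph{Reformulation.} By Michelsohn's lemma \cite{michelsohn}, the map $\omega\mapsto\omega^{n-1}$ is a bijection from Hermitian metrics onto positive $(n-1,n-1)$-forms; in particular a solution of \eqref{continuitygau} is determined by the $(n-1,n-1)$-form on its right-hand side, which must therefore be positive. Using $\mathrm{Ric}(\omega)=\mathrm{Ric}(\alpha)-\sqrt{-1}\partial\bar\partial\log(\omega^{n}/\alpha^{n})$ and expanding, one checks that, with $\varphi:=s(n-1)\log(\omega^{n}/\omega_{0}^{n})$, equation \eqref{continuitygau} becomes
\[
\omega^{n-1}=\Phi_{s}+\sqrt{-1}\partial\bar\partial\varphi\wedge\alpha^{n-2}+\mathrm{Re}\left[\sqrt{-1}\partial\varphi\wedge\bar\partial(\alpha^{n-2})\right]=:\Phi_{s}+\mathcal{L}_{\alpha}\varphi .
\]
Thus the positivity required of the right-hand side of \eqref{continuitygau} is exactly the condition defining $T$; conversely, solvability at a parameter $s$ produces such a $\varphi$, so no solution can exist for $s>T$, and it remains to construct one for each $s\in[0,T)$. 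Writing $\omega[\varphi]$ for the Hermitian metric supplied by Michelsohn's lemma with $\omega[\varphi]^{n-1}=\Phi_{s}+\mathcal{L}_{\alpha}\varphi$ (defined wherever this form is positive), the quantity $\omega[\varphi]^{n}$ equals, up to a fixed positive smooth factor, a power of $\det_{\alpha}(\Phi_{s}+\mathcal{L}_{\alpha}\varphi)$, so the relation $\varphi=s(n-1)\log(\omega^{n}/\omega_{0}^{n})$ turns into the scalar equation
\[
s\,\log\det_{\alpha}(\Phi_{s}+\mathcal{L}_{\alpha}\varphi)=\varphi+h_{s},\qquad h_{s}\in C^{\infty}(M,\mathbb{R}),\quad h_{0}\equiv0,
\]
i.e.\ a complex Monge--Amp\`ere equation for $(n-1,n-1)$-forms in which the unknown $\varphi$ also appears, with the favourable sign, as a zeroth-order term.

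\emph{Continuity method.} Put $S:=\{\,s\in[0,T):\text{ the scalar equation has a solution }\varphi\in C^{\infty}(M,\mathbb{R})\,\}$; then $0\in S$, with $\varphi\equiv0$ (equivalently $\omega=\omega_{0}$). Openness of $S$ in $[0,T)$ follows from the implicit function theorem in H\"older spaces: at a solution the linearization of the left-hand operator is $sE-\mathrm{Id}$ with $E$ an elliptic second-order operator --- its ellipticity, and the smooth dependence $\varphi\mapsto\omega[\varphi]$, again consequences of Michelsohn's lemma --- and $sE-\mathrm{Id}$ is invertible, the term $-\mathrm{Id}$ furnishing the sign needed in the maximum principle. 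Uniqueness at a fixed $s$ follows from the same structure, by evaluating the difference of the equations for two solutions at the extrema of $\varphi_{1}-\varphi_{2}$ and invoking Michelsohn's lemma; the smooth dependence of $s\mapsto\omega(s)$ is built into the implicit function theorem. For closedness one needs a priori $C^{\infty}$ bounds on $\varphi$, uniform for $s$ in any compact $[0,s_{0}]\subset[0,T)$. The $C^{0}$ bound comes from the maximum principle: fixing $\hat\varphi$ with $\hat\Phi:=\Phi_{s_{0}}+\mathcal{L}_{\alpha}\hat\varphi>0$ (possible because $s_{0}<T$) and using, for $s\le s_{0}$, the positive reference form $(1-\tfrac{s}{s_{0}})\,\omega_{0}^{n-1}+\tfrac{s}{s_{0}}\,\hat\Phi$ --- legitimate since $\Phi_{s}$ is affine in $s$ --- the favourable zeroth-order term gives two-sided control of $\log(\omega^{n}/\omega_{0}^{n})$, hence of $\varphi$. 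Once $\|\varphi\|_{C^{0}}$ is under control, the equation is exactly of the form treated by Sz\'ekelyhidi, Tosatti \& Weinkove (theirs carries an undetermined constant where ours carries $\varphi$), so their second-order estimate together with the ensuing Evans--Krylov and Schauder iterations yields the higher-order bounds, with constants depending only on $s_{0}$, $\alpha$, $\omega_{0}$ and $\|\varphi\|_{C^{0}}$. Hence $S$ is relatively closed in $[0,T)$, so $S=[0,T)$, which with uniqueness proves the theorem.

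\emph{Main obstacle.} I expect the technical core to be the second-order (``Laplacian'') estimate for the $(n-1,n-1)$-form Monge--Amp\`ere equation: the metric $\omega[\varphi]$ is recovered non-locally from $\omega[\varphi]^{n-1}$, the torsion of $\alpha$ enters at top order once the equation is differentiated, and the concavity of the operator must be exploited in the refined form devised by Tosatti--Weinkove and Sz\'ekelyhidi--Tosatti--Weinkove rather than in the naive way; controlling the attendant gradient terms --- equivalently, obtaining a simultaneous $C^{1}$ bound --- is where the real work lies. A secondary delicate point is the $C^{0}$ estimate, which genuinely uses the definition of $T$ (positivity of $\Phi_{s}$ alone would not be enough) through the convex-combination reference above.
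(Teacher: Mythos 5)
Your proposal is correct and is, in substance, the route the paper takes. Both reduce \eqref{continuitygau} at fixed $s$ to a scalar Monge--Amp\`ere equation for $(n-1,n-1)$-forms in which the unknown appears on the right-hand side with the favourable sign; both recover the metric from its $(n-1)$-st power (you via Michelsohn's bijection, the paper via the Hodge star operator and \eqref{astastpqform}--\eqref{astdet}); both derive the $C^0$ bound from the maximum principle using that zeroth-order term; and both lean on the Sz\'ekelyhidi--Tosatti--Weinkove machinery for the higher-order a priori estimates. Your positive reference form $(1-\tfrac{s}{s_0})\omega_0^{n-1}+\tfrac{s}{s_0}\hat\Phi$, justified by affineness of $\Phi_s$ in $s$, is precisely the paper's $\hat\omega_s^{n-1}$, which the paper packages less transparently through the auxiliary volume form $\Omega=\omega_0^n e^{\varphi/((n-1)\hat T)}$ and convexity of the positive Hermitian cone. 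The one genuine organizational divergence is the continuity parameter: you run openness/closedness directly in $s\in[0,T)$, whereas the paper fixes $s$ (so $\lambda=1/s$), isolates a stand-alone solvability theorem (Theorem~\ref{thmuse}) with $\lambda>0$ as a frozen constant, and proves that theorem by an auxiliary $t$-continuity interpolating the right-hand side from the trivial $G_0=\log(\varpi^n/\alpha^n)$ to the true $G$. The paper's version is modular and keeps $\lambda$ and all background data fixed along the path, which makes the Aubin--Schauder isomorphism argument for openness a one-liner and lets the a priori estimates be stated once; your version works equally well because the constants degrade only as $s\to0$, where $0\in S$ is trivial, but you should state explicitly that the constants in the second-order estimate may depend on $1/s$, i.e.\ are uniform only on $[\delta,s_0]\subset(0,T)$. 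You also correctly locate the technical core in the second-order estimate; the paper's only modification to \cite{stw1503} (via \cite{fenggezhengcmabd1}) is the observation that with $\varphi/s$ replacing the normalization constant one has $\nabla h=O(1+|\partial\varphi|_\alpha)$ and $\nabla\bar\nabla h=O(\lambda_1)$ rather than $O(1)$, which is harmless in the blow-up and maximum-principle arguments.
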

When $n=2$, Theorem \ref{mainthm} is proved by Sherman \& Weinkove \cite{shermannweinkovejga2020} in the Hermitian case and by La Nave \& Tian \cite{lanavetianmathann} in the K\"ahler case.

The outline of the paper is as follows.  In Section \ref{section:pre} we establish preliminaries about Hermitian geometry for later use.  In Section \ref{section:pf}  we prove Theorems \ref{mainthm}.

\noindent {\bf Acknowledgements}
The author thanks Professor Jean-Pierre Demailly, Valentino Tosatti and Ben Weinkove for their invaluable directions and Yashan Zhang for very helpful conversations.

\section{Preliminaries}
\label{section:pre}
In this section, we collect some basic materials about   Hermitian geometry (see for example \cite{twjdg,zhengimrn}). Let $(M,J,g)$ be a Hermitian manifold with $\dim_{\mathbb{C}}M=n$, where $J$ is a complex structure and $g$ is the Hermitian metric respect to $J$, i.e., a Riemannian metric with $g(JX,JY)=g(X,Y)$ for any vector fields $X,\,Y\in \mathfrak{X}(M)$.
Then can define a real $(1,1)$ form $\omega$  by
$$
\omega(X, Y):=g(JX, Y),\quad \forall\;X,\,Y\in \mathfrak{X}(M).
$$
This form is determined uniquely by $g$ and vice versa. The Chern connection of $g$, denoted by $\nabla$, is the unique connection determined by $\nabla g=\nabla J=0$. The torsion of $\nabla$ is defined by
\begin{equation*}
T(X,Y):=\nabla_XY-\nabla_YX-[X,Y].
\end{equation*}
The curvature of $\nabla$ is defined by
\begin{equation}
R(X,Y)Z:=\nabla_X\nabla_YZ-\nabla_Y\nabla_XZ-\nabla_{[X,Y]}Z,
\end{equation}
with
$$
R(X,Y,Z,W):=g(R(X,Y)Z,W),\quad \forall\;W,X,Y,Z\in\mathfrak{X}(M).
$$
The Chern-Ricci curvature $\mathrm{Ric}$ is defined by
$$
\mathrm{Ric}(X,Y):=\mbox{trace of the map}\;Z\mapsto R(X,Y)Z.
$$
Then in the real local coordinate $x=(x^1,\cdots,x^{2n})$ with
$$J\left(\partial/\partial x^i\right)=\partial/\partial x^{n+i},\;J\left(\partial/\partial x^{n+i}\right)=-\partial/\partial x^i,\;i=1,\cdots, n,$$
we have
$$
g_{ij}=g_{n+i,n+j},\quad g_{i,n+j}=-g_{n+i,j},\quad g_{\alpha\beta}=g_{\beta\alpha},\quad i,\,j=1,\cdots,n,\;\alpha,\,\beta=1,\cdots,2n,
$$
where $g_{\alpha\beta}=g\left(\partial/\partial x^{\alpha}, \partial/\partial x^{\beta}\right)$.
Hence the complex local coordinate is given by $$z=(z^1,\cdots,z^n)=(x^1+\sqrt{-1}x^{n+1},\cdots,x^n+\sqrt{-1}x^{2n}).$$
We use the notation $\partial_i=\partial/\partial z^i,\;\partial_{\overline{j}}=\partial/\partial\overline{z}^j,\;i,\,j=1,\cdots,n$.   One can infer that
\begin{align}
g=&\sum\limits_{i,j=1}^n g_{i\overline{j}}\left(\mathrm{d}z^i\otimes \mathrm{d}\overline{z}^j+ \mathrm{d}\overline{z}^j\otimes\mathrm{d}z^i\right),\nonumber\\
\label{kahlerform}\omega=&\sqrt{-1}\sum\limits_{i,j=1}^n g_{i\overline{j}}\mathrm{d}z^i\wedge \mathrm{d}\overline{z}^j,
\end{align}
where $g_{i\overline{j}}=\frac{1}{2} \left(g_{i,j}+\sqrt{-1}g_{i,n+j}\right)$.

For each $(p,q)$ form
$$
\phi=\frac{1}{p!q!}\phi_{i_1\cdots i_p\overline{j_1}\cdots\overline{j_q}}\mathrm{d} z^{i_1}\wedge\cdots\wedge\mathrm{d} z^{i_p}\wedge\mathrm{d} \overline{z}^{j_1}\wedge\cdots\wedge\mathrm{d} \overline{z}^{j_q},
$$
the Hodge star operator $\ast$ with respect to the volume form $\frac{\omega^n}{n!}$ is given by  (see for example \cite{luqikeng})
\begin{align}\label{starformulacomplex}
\ast \phi =&\frac{(\sqrt{-1})^n(-1)^{np+\frac{n(n-1)}{2}}\det g}{(n-p)!(n-q)!p!q!}\phi_{i_1\cdots i_p\overline{j_1}\cdots\overline{j_q}}g^{\overline{\ell_1}i_1}
\cdots g^{\overline{\ell_p}i_p}g^{\overline{j_1}k_1}\cdots g^{\overline{j_q}k_q}\\
&\delta_{\ell_1\cdots \ell_pb_1\cdots b_{n-p}}^{1\cdots\cdots n}\delta_{k_1\cdots k_qa_1\cdots a_{n-q}}^{1\cdots\cdots n}
\mathrm{d} z^{a_1}\wedge\cdots\wedge\mathrm{d} z^{a_{n-q}}\wedge\mathrm{d} \overline{z}^{b_1}\wedge\cdots\wedge\mathrm{d}\overline{z}^{b_{n-p}}.\nonumber
\end{align}
A direct calculation yields that
$$
\ast 1=\frac{\omega^n}{n!},
\quad \overline{\ast \phi}= \ast\overline{ \phi},
$$
where the second equality shows that $\ast$ is a real operator.

It follows from \eqref{starformulacomplex} that
\begin{equation}
\label{astastpqform}
  \ast\ast \phi=(-1)^{p+q}\phi.
\end{equation}
Let us recall the concepts of positivity in for example \cite[Chapter III]{demaillybook1}.

A $(p,p)$ form $\varphi$ is said to be positive   if for any $(1,0)$ forms $\gamma_j,\,1\leq j\leq n-p$, then
$$
\varphi\wedge\sqrt{-1}\gamma_1\wedge\overline{\gamma_1}\wedge\cdots\wedge\sqrt{-1} \gamma_{n-p}\wedge\overline{\gamma_{n-p}}
$$
is a positive $(n,n)$ form. Any positive $(p,p)$ form $\varphi$ is real, i.e., $\overline{\varphi}=\varphi$. In particular,   a real $(1,1)$ form given by
\begin{align}\label{11}
\phi=\sqrt{-1}\phi_{i\overline{j}}\mathrm{d} z^i\wedge\mathrm{d}\overline{z}^j
\end{align}
is positive if and only if $(\phi_{i\overline{j}})$ is a semi-positive Hermitian matrix and we denote $\det \phi:=\det(\phi_{i\overline{j}})$.

A real $(n-1,n-1)$ form given by
\begin{align}\label{n-1}
\psi=&(\sqrt{-1})^{n-1}\sum\limits_{i,j=1}^n(-1)^{\frac{n(n+1)}{2}+i+j+1}\psi^{\overline{j}i}\\
&\mathrm{d} z^1\wedge\cdots\wedge\widehat{\mathrm{d} z^i}\wedge\cdots\wedge\mathrm{d} z^n\wedge \mathrm{d} \overline{z}^1\wedge\cdots\wedge\widehat{\mathrm{d}\overline{z}^j}\wedge\cdots\wedge\mathrm{d} \overline{z}^n\nonumber
\end{align}
is positive if and only if $(\psi^{\overline{j}i})$ is a semi-positive Hermitian matrix and we denote $\det\psi:=\det(\psi^{\overline{j}i})$.
We remark that one can call a real $(1,1)$ form $\phi$ ( resp. a real $(n-1,n-1)$ form $\psi$) strictly positive
if the
Hermitian matrix $(\phi_{i \overline{j}})$ (resp. $(\psi^{\overline{j} i})$) is positive
definite.

For a strictly positive $(1,1)$ form $\phi$ defined as in \eqref{11}, we can deduce a strictly positive $(n-1,n-1)$ form
\begin{align}\label{11n1n1formula}
\frac{\phi^{n-1}}{(n-1)!}=&(\sqrt{-1})^{n-1}\sum\limits_{k,\ell=1}^n(-1)^{\frac{n(n+1)}{2}+k+\ell+1}\mathrm{det}(\phi_{i\overline{j}})\tilde{\phi}^{\overline{\ell}k}\\
&\mathrm{d} z^{ 1}\wedge\cdots\wedge\widehat{\mathrm{d} z^k}\wedge \cdots\wedge\mathrm{d} z^{n}\wedge\mathrm{d}\overline{z}^{ 1}\wedge\cdots\wedge\widehat{\mathrm{d} \overline{z}^{\ell}}\wedge\cdots\wedge\cdots\wedge \mathrm{d}\overline{z}^{n}\nonumber
\end{align}
where $(\tilde{\phi}^{\overline{\ell}k} )$ is the inverse matrix of $(\phi_{i\overline{j}})$, i.e.,  $\sum\limits_{\ell=1}^n\tilde{\phi}^{\overline{\ell}j}\phi_{k\overline{\ell}}=\delta_{k}^j$. Hence we have
\begin{align}\label{detchin-1}
\det\left(\frac{\phi^{n-1}}{(n-1)!}\right)=\left(\det\phi\right)^{n-1}.
\end{align}
Hence, if $\xi$ is another real $(1,1)$ form with $\det\xi\neq0$, then we can deduce
\begin{equation}\label{astdet}
\frac{\det(\ast\phi)}{\det(\ast\xi)}=\frac{\det\phi}{\det\xi}.
\end{equation}
The Christoffel symbols of $\nabla$ are denoted by
$$
\Gamma_{\alpha\beta}^\gamma:=\mathrm{d}z^\gamma(\nabla_\alpha\partial_\beta),\quad
\alpha,\beta,\gamma\in\{1,\cdots,n,\bar 1,\cdots,\bar n\},
$$
where we use the notation that $\mathrm{d}z^{\bar i}=\mathrm{d}\bar z^i$ with $1\leq i\leq n$. The non-zero components of the Christoffel symbols of $\nabla$ are
$$
\Gamma_{ij}^k=g^{k\bar \ell}\partial_i g_{j\bar \ell},\quad \Gamma_{\bar i\bar j}^{\bar k}=\overline{\Gamma_{ij}^k},\quad 1\leq i,j,k\leq n.
$$
We also use the notations
$$
T^\alpha_{\beta\gamma}:=\mathrm{d}z^\alpha(T(\partial_\beta,\partial_\gamma)),\quad
R_{\alpha\beta\gamma}{}^\delta:=\mathrm{d}z^\delta(R(\partial_\alpha,\partial_\beta)\partial_\gamma),\quad
\alpha,\beta,\gamma,\delta\in\{1,\cdots,n,\bar 1,\cdots,\bar n\}.
$$
Then one infers
$$
T_{ij}^k=\Gamma_{ij}^k-\Gamma_{ji}^k,\quad R_{i\bar jk}{}^p=-\partial_{\bar j}\Gamma_{ik}^p,\quad
R_{i\bar jk\bar\ell}:=R_{i\bar jk}{}^pg_{p\bar \ell}.
$$
The Chern-Ricci form is defined by
\begin{equation}
\label{chernricci}
\mathrm{Ric}(\omega):=\sqrt{-1}R_{i\bar jp}{}^p\mathrm{d}z^i\wedge\mathrm{d}\bar z^j
=-\sqrt{-1}\partial\bar\partial\log\det(g_{i\bar j}).
\end{equation}

\section{Proof of the Main Theorem}
\label{section:pf}
 In order to prove Theorem \ref{mainthm}  we need reduce
the equation  \eqref{continuitygau}
to a  complex Monge-Amp\`ere type equation on $M$.
For each $\hat T \in (0,T)$, the definition of $T$ yields that there is
a smooth function $\varphi$ such that
$$\Phi_{\hat T}+\sqrt{-1}\partial\bar\partial\varphi\wedge\alpha^{n-2}
+\Re\left[\sqrt{-1}\partial\varphi\wedge\bar\partial(\alpha^{n-2})\right]>0,$$
with
\begin{equation*}
\Phi_{\hat T}:=\omega_0^{n-1}-\hat{T}(n-1)\left(\mathrm{Ric}(\omega_0)\wedge\alpha^{n-2}
-\Re\left(\sqrt{-1}\partial\log\frac{\omega_0^n}{\alpha^n}\wedge\bar\partial(\alpha^{n-2})\right)\right).
\end{equation*}
Let $\Omega$ be the volume form given by
\begin{equation}
\label{defnvolOmega}
\Omega =
\omega_0^n e^{\frac{\varphi}{(n-1)\hat T}}.
\end{equation}
Then it follows from \eqref{chernricci}  that
$$\omega_0^{n-1}-\hat{T}(n-1)\left(\mathrm{Ric}(\Omega)\wedge\alpha^{n-2}
-\Re\left(\sqrt{-1}\partial\log\frac{\Omega}{\alpha^n}\wedge\bar\partial(\alpha^{n-2})\right)\right)>0.$$
The convexity of the space of Hermitian matrices (cf. \eqref{n-1}) yields that
$$
\hat\omega_s^{n-1}:=\omega_0^{n-1}-s(n-1)\left(\mathrm{Ric}(\Omega)\wedge\alpha^{n-2}
-\Re\left(\sqrt{-1}\partial\log\frac{\Omega}{\alpha^n}\wedge\bar\partial(\alpha^{n-2})\right)\right)>0,\quad \forall\;s\in[0,\hat T].
$$
\begin{prop}
\label{prop}
Let $(M,\omega_0)$ be a closed Hermitian manifold with $\dim_{\mathbb{C}}M=n$ and $\omega_0$ a Hermitian metric. Then for $s\in [0,\hat T]$ fixed,  there exists a Gauduchon metric
$\omega$ satisfying  \eqref{continuitygau}
if and only if
there exists a smooth function $u\in C^\infty(M,\mathbb{R})$ satisfying
\begin{equation}
\label{ma}
\log\frac{\det\omega^{n-1}}{e^{\frac{\varphi}{\hat T}}\det\omega_0^{n-1}}- u =  0,
\end{equation}
with
\begin{equation}
\label{ma1}
 \omega^{n-1}:=\hat\omega_s^{n-1}+s\left(\sqrt{-1}\partial\bar\partial u\wedge\alpha^{n-2}+\Re\left(\sqrt{-1}\partial u\wedge\bar\partial(\alpha^{n-2})\right)\right)>0.
\end{equation}
\end{prop}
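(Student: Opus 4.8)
The plan is to read Proposition \ref{prop} as an algebraic reformulation of \eqref{continuitygau}, with no analysis involved: the unknown metric $\omega$ enters \eqref{continuitygau} only through its Chern--Ricci form $\mathrm{Ric}(\omega)$ and its volume form $\omega^n$, so I would encode $\omega$ by the scalar unknown $u:=(n-1)\log\frac{\omega^n}{\Omega}$, equivalently $\omega^n=e^{\frac{u}{n-1}+\frac{\varphi}{(n-1)\hat T}}\,\omega_0^n$ by \eqref{defnvolOmega}, and check that \eqref{continuitygau} for a Gauduchon metric $\omega$ is equivalent to the pair \eqref{ma}--\eqref{ma1} for $u$. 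Two elementary facts do the bookkeeping. First, the operator
\[
P_\alpha(v):=\sqrt{-1}\partial\bar\partial v\wedge\alpha^{n-2}+\Re\!\left(\sqrt{-1}\,\partial v\wedge\bar\partial(\alpha^{n-2})\right)
\]
is linear in the real smooth function $v$ and satisfies $\partial\bar\partial P_\alpha(v)=0$: writing the real part as $\tfrac12\sqrt{-1}\partial v\wedge\bar\partial\alpha^{n-2}-\tfrac12\sqrt{-1}\bar\partial v\wedge\partial\alpha^{n-2}$ and expanding by the Leibniz rule, it contributes $-\sqrt{-1}\partial\bar\partial v\wedge\partial\bar\partial(\alpha^{n-2})$ under $\partial\bar\partial$, cancelling the term $\sqrt{-1}\partial\bar\partial v\wedge\partial\bar\partial(\alpha^{n-2})$ coming from $\partial\bar\partial(\sqrt{-1}\partial\bar\partial v\wedge\alpha^{n-2})$; only $\partial^2=\bar\partial^2=0$ is used. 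Second, \eqref{chernricci} gives, for any volume form $\Xi$, the identity $\mathrm{Ric}(\omega)=\mathrm{Ric}(\Xi)-\sqrt{-1}\partial\bar\partial\log\frac{\omega^n}{\Xi}$; with $\Xi=\alpha^n$ this rearranges to
\[
\mathrm{Ric}(\omega)\wedge\alpha^{n-2}-\Re\!\left(\sqrt{-1}\,\partial\log\tfrac{\omega^n}{\alpha^n}\wedge\bar\partial(\alpha^{n-2})\right)=\mathrm{Ric}(\alpha)\wedge\alpha^{n-2}-P_\alpha\!\left(\log\tfrac{\omega^n}{\alpha^n}\right),
\]
and similarly with $\omega$ replaced by $\omega_0$ or by $\Omega$. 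Substituting $\log\frac{\omega^n}{\alpha^n}=\log\frac{\omega^n}{\Omega}+\log\frac{\Omega}{\alpha^n}$ and using linearity of $P_\alpha$, equation \eqref{continuitygau} becomes exactly $\omega^{n-1}=\hat\omega_s^{n-1}+s\,P_\alpha(u)$, i.e. \eqref{ma1}.

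For ``$\Rightarrow$'', given a Gauduchon metric $\omega$ solving \eqref{continuitygau} I put $u:=(n-1)\log\frac{\omega^n}{\Omega}\in C^\infty(M,\mathbb{R})$; the computation above is \eqref{ma1}, and \eqref{detchin-1} gives $\frac{\det\omega^{n-1}}{\det\omega_0^{n-1}}=\big(\tfrac{\omega^n}{\omega_0^n}\big)^{n-1}$, so that $\log\frac{\det\omega^{n-1}}{e^{\varphi/\hat T}\det\omega_0^{n-1}}=(n-1)\log\tfrac{\omega^n}{\omega_0^n}-\tfrac{\varphi}{\hat T}=(n-1)\log\tfrac{\omega^n}{\Omega}=u$, which is \eqref{ma}. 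For ``$\Leftarrow$'', suppose $u$ satisfies \eqref{ma}--\eqref{ma1}. By \eqref{ma1} its right-hand side is a strictly positive real $(n-1,n-1)$ form, and by \eqref{11n1n1formula} together with \eqref{detchin-1} the map $\phi\mapsto\frac{\phi^{n-1}}{(n-1)!}$ is a bijection from strictly positive $(1,1)$ forms onto strictly positive $(n-1,n-1)$ forms: if $P$ is the positive definite Hermitian matrix attached via \eqref{n-1} to the target form, its preimage is the metric whose matrix via \eqref{11} is $\det(P)^{-\frac{n-2}{n-1}}\mathrm{adj}(P)$, again positive definite. Thus \eqref{ma1} determines a unique Hermitian metric $\omega$; then \eqref{ma} forces $u=(n-1)\log\frac{\omega^n}{\Omega}$ exactly as above, and running the identities of the first paragraph backwards recovers \eqref{continuitygau}. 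Finally $\omega$ is Gauduchon, because $\partial\bar\partial\omega^{n-1}=\partial\bar\partial\hat\omega_s^{n-1}+s\,\partial\bar\partial P_\alpha(u)=\partial\bar\partial\hat\omega_s^{n-1}$, while $\partial\bar\partial\hat\omega_s^{n-1}=0$ follows from $\partial\bar\partial P_\alpha(\cdot)=0$ and the definition of $\hat\omega_s$ under the Gauduchon hypothesis on $\omega_0$ (cf. the discussion after \eqref{continuitygau}).

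The proposition carries no analytic difficulty: the work is purely bookkeeping, so the main obstacle is keeping the signs and the real parts straight when $\partial\bar\partial$ is expanded against wedge products with $\alpha^{n-2}$ — in particular verifying the cancellation $\partial\bar\partial P_\alpha(v)=0$ — and keeping the $\Omega$-versus-$\alpha^n$ and $\omega_0$-versus-$\Omega$ substitutions consistent. The one point that must be checked rather than merely computed is that the $(n-1,n-1)$ form produced by \eqref{ma1} really is the $(n-1)$-st power of a genuine Hermitian metric, not of an indefinite $(1,1)$ form; this is exactly the role of \eqref{detchin-1} and the adjugate formula above. The substantive analysis, namely solving the scalar Monge--Amp\`ere type equation \eqref{ma} for $u$, is handled separately via a priori estimates and the resolution of the Gauduchon conjecture.
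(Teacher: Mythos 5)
Your argument establishes the same equivalence the paper does and by essentially the same route: encode $\omega$ through its volume ratio against a fixed reference, use \eqref{chernricci} to rewrite $(n-1)(\mathrm{Ric}(\omega)-\mathrm{Ric}(\Omega))=-\sqrt{-1}\partial\bar\partial u$ and hence \eqref{continuitygau} as \eqref{ma1}, and read off \eqref{ma} from \eqref{detchin-1}. You package the bookkeeping more systematically through the linear operator $P_\alpha$ and supply two details the paper leaves implicit --- the adjugate formula showing a strictly positive $(n-1,n-1)$ form has a unique $(n-1)$-st root among strictly positive $(1,1)$ forms, and the check that $\partial\bar\partial P_\alpha(v)=0$ --- both of which are correct and useful.

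One caveat on your closing remark that the constructed $\omega$ is Gauduchon. The Proposition places no Gauduchon hypothesis on $\omega_0$, only on $\alpha$, so your appeal to ``the Gauduchon hypothesis on $\omega_0$'' imports an assumption the statement does not give; and even granting it, $\partial\bar\partial\hat\omega_s^{n-1}=0$ does not follow from $\partial\bar\partial P_\alpha(\cdot)=0$ alone, because the decomposition $\mathrm{Ric}(\Omega)\wedge\alpha^{n-2}-\Re(\sqrt{-1}\partial\log\tfrac{\Omega}{\alpha^n}\wedge\bar\partial\alpha^{n-2})=\mathrm{Ric}(\alpha)\wedge\alpha^{n-2}-P_\alpha(\log\tfrac{\Omega}{\alpha^n})$ leaves behind the term $\mathrm{Ric}(\alpha)\wedge\partial\bar\partial\alpha^{n-2}$, which need not vanish for a general Gauduchon $\alpha$. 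The paper's own proof says nothing about the Gauduchon property of $\omega$ and Theorem \ref{mainthm} only asserts $\omega$ Hermitian, so the word ``Gauduchon'' in the Proposition is most plausibly an imprecision inherited from the special case $\omega_0=\alpha$; since nothing downstream uses it, this does not affect the substance of your argument, but the Gauduchon assertion should not be presented as proved.
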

\begin{proof}
For the `if' direction, we suppose that $\omega:=\omega(s)$ satisfying \eqref{continuitygau}.  We define $u$ by
$$
u=\log\frac{\det\omega^{n-1}}{e^{\frac{\varphi}{\hat T}}\det\omega_0^{n-1}}.
$$
Then it follows from \eqref{chernricci} and \eqref{defnvolOmega} that
\begin{align*}
&(n-1)\mathrm{Ric}(\omega)-(n-1)\mathrm{Ric}(\Omega)\\
=&-(n-1)\sqrt{-1}\partial\bar\partial\log\frac{\omega^n}{\omega_0}+\frac{1}{\hat T}\sqrt{-1}\partial\bar\partial\varphi\\
=&-\sqrt{-1}\partial\bar\partial\log\frac{\det\omega^{n-1}}{e^{\frac{\varphi}{\hat T}}\det\omega_0^{n-1}}
=-\sqrt{-1} \partial\bar\partial u,
\end{align*}
which yields that
\begin{align*}
\omega^{n-1}
:=&\omega_0^{n-1}-s(n-1)\left(\mathrm{Ric}(\omega)\wedge\alpha^{n-2}
-\Re\left(\sqrt{-1}\partial\log\frac{\omega^n}{\alpha^n}
\wedge\bar\partial(\alpha^{n-2})\right)\right)\\
=&\omega_0^{n-1}-s(n-1)\left(\mathrm{Ric}(\Omega)\wedge\alpha^{n-2}
-\Re\left(\sqrt{-1}\partial\log\frac{\Omega}{\alpha^n}
\wedge\bar\partial(\alpha^{n-2})\right)\right)\\
&+s\sqrt{-1}\partial\bar\partial u\wedge\alpha^{n-2}
+s\Re\left(\sqrt{-1}\partial u\wedge\bar\partial(\alpha^{n-2})\right)\\
=&\hat\omega_s^{n-1}+s\sqrt{-1}\partial\bar\partial u\wedge\alpha^{n-2}
+s\Re\left(\sqrt{-1}\partial u\wedge\bar\partial(\alpha^{n-2})\right),
\end{align*}
as desired.

For the `only if' direction, if $u$ satisfies \eqref{ma}-\eqref{ma1}, then a direct calculation, together with \eqref{chernricci}, yields that
$\omega$ satisfies \eqref{continuitygau}.
\end{proof}
An immediate consequence of Proposition \ref{prop} is the uniqueness of solutions to the continuity equation \eqref{continuitygau}.
\begin{cor}
\label{corunique}
Let $(M,\omega_0)$ be a closed Hermitian manifold with $\dim_{\mathbb{C}}M=n$ and $\omega_0$ a Hermitian metric. Then if $\omega'$ and $\omega$ are two almost Hermitian metrics solving the continuity equation  \eqref{continuitygau}  for the same $s$ in $[0,T)$, then $\omega'=\omega$.
\end{cor}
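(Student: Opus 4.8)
The plan is to reduce uniqueness to the scalar equation \eqref{ma}--\eqref{ma1} by means of Proposition \ref{prop}, and then to run a maximum-principle comparison. Fix $s\in[0,T)$; the case $s=0$ is immediate, so assume $s>0$ and choose $\hat T$ with $s\leq\hat T<T$, so that the data $\varphi$, $\Omega$, $\hat\omega_s$ constructed just before Proposition \ref{prop} are available (they depend only on $\hat T$ and the fixed background, not on the particular solution). Applying the `if' direction of Proposition \ref{prop} to $\omega$ and to $\omega'$, I obtain $u,u'\in C^\infty(M,\mathbb{R})$, each satisfying \eqref{ma}--\eqref{ma1} with the \emph{same} $\hat\omega_s$ on the right-hand side of \eqref{ma1}. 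The first observation is that it suffices to prove $u=u'$: then \eqref{ma1} forces $\omega^{n-1}=(\omega')^{n-1}$, and the map $\phi\mapsto\phi^{n-1}$ on strictly positive $(1,1)$ forms is injective, since by \eqref{11n1n1formula} the Hermitian matrix of $\phi^{n-1}/(n-1)!$ is the cofactor matrix of $(\phi_{i\overline{j}})$, which together with \eqref{detchin-1} determines $\det(\phi_{i\overline{j}})$ and hence $(\phi_{i\overline{j}})$ itself.

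To prove $u=u'$, set $w:=u-u'$. Subtracting the two copies of \eqref{ma} gives $\log\big(\det\omega^{n-1}/\det(\omega')^{n-1}\big)=w$ on $M$, and subtracting the two copies of \eqref{ma1} gives the pointwise identity
\begin{equation*}
\omega^{n-1}-(\omega')^{n-1}=s\,\sqrt{-1}\partial\bar\partial w\wedge\alpha^{n-2}+s\,\Re\!\left(\sqrt{-1}\partial w\wedge\bar\partial(\alpha^{n-2})\right).
\end{equation*}
Next I evaluate at a point $p\in M$ where the real function $w$ attains its maximum: there $\mathrm{d}w(p)=0$, so $\partial w(p)=0$ and the first-order term drops out, while $\sqrt{-1}\partial\bar\partial w(p)\leq0$ as a $(1,1)$ form. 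Wedging a nonpositive $(1,1)$ form with the positive $(n-2,n-2)$ form $\alpha^{n-2}$ keeps the sign, so $\omega^{n-1}-(\omega')^{n-1}\leq0$ at $p$ as $(n-1,n-1)$ forms; passing to the associated Hermitian matrices via \eqref{n-1} (which is order-preserving) and using that $0<A\leq B$ implies $\det A\leq\det B$, I conclude $\det\omega^{n-1}\leq\det(\omega')^{n-1}$ at $p$, i.e. $w(p)\leq0$. Hence $w\leq0$ on $M$, so $u\leq u'$; exchanging the roles of $\omega$ and $\omega'$ gives $u\geq u'$. Therefore $u=u'$ and $\omega=\omega'$.

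I do not expect a genuine obstacle: the argument is just the maximum principle, which applies cleanly because, after subtracting the two instances of \eqref{ma}, the unknown $w$ enters with the favorable sign. The only places needing a line of justification are the two positivity facts used at $p$ — that wedging a nonpositive $(1,1)$ form with the positive $(n-2,n-2)$ form $\alpha^{n-2}$ yields a nonpositive $(n-1,n-1)$ form, and that the correspondence in \eqref{n-1} between real $(n-1,n-1)$ forms and Hermitian matrices respects the orderings — and both follow at once from the definition of positivity recalled in Section \ref{section:pre}.
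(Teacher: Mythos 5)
Your argument is correct and follows the same route as the paper: reduce to the scalar equation \eqref{ma}--\eqref{ma1} via Proposition \ref{prop}, set $w$ to be the difference of the two potentials, and apply the maximum principle, using that the nonpositivity of $\sqrt{-1}\partial\bar\partial w$ at a maximum propagates through the wedge with $\alpha^{n-2}$ to give a determinant inequality. The only cosmetic differences are that the paper works with $\theta=u'-u$ and uses both the maximum and minimum points directly (rather than the maximum plus a symmetry argument), and that the paper leaves the final step implicit whereas you make it explicit and justify it cleanly: namely, that $u=u'$ forces $\omega^{n-1}=(\omega')^{n-1}$ and hence $\omega=\omega'$, because $\phi\mapsto\phi^{n-1}$ is injective on strictly positive $(1,1)$ forms by \eqref{11n1n1formula} and \eqref{detchin-1}. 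That added step is a genuine, if small, completeness improvement; otherwise the two proofs coincide.
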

\begin{proof} For $s=0$, there is nothing to prove.  For $s \in (0,T)$, it suffices to prove the uniqueness of solutions  to the equation  \eqref{ma}-\eqref{ma1} by Proposition \ref{prop}. We assume that both $u$ and $u'$ are the solutions to \eqref{ma}-\eqref{ma1}. We set $\theta:=u'-u$. Then it follows from \eqref{ma} that
\begin{equation*}
 \log \frac{ \det\left(\omega_u^{n-1}+s\sqrt{-1}\partial\bar\partial \theta\wedge\alpha^{n-2}
+s\Re\left(\sqrt{-1}\partial \theta\wedge\bar\partial(\alpha^{n-2})\right)\right)  }{ \det\omega_u^{n-1}} =\theta,
\end{equation*}
where
$$
\omega_u^{n-1}:=\hat\omega_s^{n-1}+s\sqrt{-1}\partial\bar\partial u\wedge\alpha^{n-2}
+s\Re\left(\sqrt{-1}\partial u\wedge\bar\partial(\alpha^{n-2})\right).
$$
Since at the point where $\theta$ attains its maximum (resp. minimum) one has
$$
\mathrm{d}\theta =0,\quad \sqrt{-1}\partial\bar\partial \theta\leq 0\quad (\mbox{resp.}\;\geq 0),
$$
we can deduce that $\theta\equiv0$, as desired.
\end{proof}
Note that  \eqref{ma}-\eqref{ma1}  is trivially solved when
$s=0$ by taking
$$
u=\log\frac{\det\omega_0^{n-1}}{e^{\frac{\varphi}{\hat T}}\det\omega_0^{n-1}}.
$$
Fix $s \in (0,\hat{T}]$.  We define a new function $\psi = su$
and a function $G= \log \frac{e^{\frac{\varphi}{\hat T}}\det\omega_0^{n-1}}{\det\alpha^{n-1}}$.  Then the equation  \eqref{ma}-\eqref{ma1}
becomes
\begin{equation}
\label{ma2}
\log\frac{\det\omega^{n-1}}{\det\alpha^{n-1}}=\frac{\psi}{s}+G,
\end{equation}
with
\begin{equation}
\label{ma3}
 \omega^{n-1}:=\hat\omega_s^{n-1}+ \left(\sqrt{-1}\partial\bar\partial \psi\wedge\alpha^{n-2}+\Re\left(\sqrt{-1}\partial \psi\wedge\bar\partial(\alpha^{n-2})\right)\right)>0.
\end{equation}
It follows from \eqref{astastpqform} and \eqref{astdet} that
\begin{align}
\label{ma4}
\log\frac{\det\omega^{n-1}}{\det\alpha^{n-1}}
=&\log\frac{\det\left(\frac{\omega^{n-1}}{(n-1)!}\right)}
{\det\left(\frac{\alpha^{n-1}}{(n-1)!}\right)}
= \log\frac{\det\left(\ast\ast\frac{\omega^{n-1}}{(n-1)!}\right)}
{\det\left(\ast\ast\frac{\alpha^{n-1}}{(n-1)!}\right)} \\
=&\log\frac{\det\left(\ast\frac{\omega^{n-1}}{(n-1)!}\right)}
{\det\left(\ast\frac{\alpha^{n-1}}{(n-1)!}\right)}
= \log\frac{\det\left(\ast\frac{\omega^{n-1}}{(n-1)!}\right)}
{\det\alpha}.\nonumber
\end{align}
Here and henceforth, $\ast$ is the Hodge star operator with respect to $\frac{\alpha^n}{n!}$.

Recall that $s$ here is fixed.   Then Theorem \ref{mainthm} follows from Propositin \ref{prop}, \eqref{ma}, \eqref{ma1}, \eqref{ma2}, \eqref{ma3}, \eqref{ma4} and the following result.
\begin{thm}
\label{thmuse}
Let $(M,\omega_h)$ be a closed almost Hermitian manifold with $\dim_{\mathbb{C}}M=n$ and $\omega_h$ a  Hermitian metric.  Then for $G\in C^\infty(M,\mathbb{R})$ and $\lambda>0$ a constant, there exists a unique solution $\varphi$ to the equation
\begin{equation}
\label{ma5}
\log \frac{\left(\varpi+\frac{1}{n-1}\left[(\Delta \varphi)\alpha-\sqrt{-1}\partial\bar\partial \varphi\right]+Z(\mathrm{d}\varphi)\right)^n}{\alpha^n} = \lambda \varphi +G,
\end{equation}
where
$
\varpi=\frac{1}{(n-1)!}\ast \omega_h^{n-1},
$
$
\Delta \varphi=\alpha^{\overline{j}i}\partial_i\partial_{\overline{j}}\varphi,
$
\begin{align}
\label{tildeomega}
\tilde{\omega}:=\varpi+\frac{1}{n-1}\left[(\Delta \varphi)\alpha-\sqrt{-1}\partial\bar\partial \varphi\right]+Z(\mathrm{d}\varphi)
=:\sqrt{-1} \tilde g_{i\overline{j}}\mathrm{d} z^i\wedge\mathrm{d} \overline{z}^j>0,
\end{align}
and
\begin{align}
\label{zu}
Z(\mathrm{d}\varphi)=\frac{1}{(n-1)!}\ast\Re\left(\sqrt{-1}\partial \varphi\wedge\bar\partial(\alpha^{n-2})\right).
\end{align}
\end{thm}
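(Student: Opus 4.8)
The plan is to prove Theorem \ref{thmuse} by the continuity method, exploiting that the Monge--Amp\`ere type equation \eqref{ma5} is, after the substitution $\tilde\omega\leftrightarrow\omega$ via the Hodge star correspondence of Section \ref{section:pre}, precisely the kind of equation solved in the work of Sz\'ekelyhidi, Tosatti \& Weinkove on the Gauduchon conjecture; the extra zeroth-order term $\lambda\varphi$ with $\lambda>0$ makes the analysis \emph{easier}, not harder, since it supplies the estimates at no cost. First I would set up the one-parameter family
\begin{equation*}
\log\frac{\left(\varpi+\tfrac{1}{n-1}\left[(\Delta\varphi_t)\alpha-\sqrt{-1}\partial\bar\partial\varphi_t\right]+Z(\mathrm{d}\varphi_t)\right)^n}{\alpha^n}=\lambda\varphi_t+tG,\qquad t\in[0,1],
\end{equation*}
noting that $t=0$ is solved by $\varphi_0=\tfrac1\lambda\log\tfrac{\varpi^n}{\alpha^n}$ (one checks $\varpi$ is a positive $(1,1)$ form because $\omega_h$ is Hermitian, using \eqref{11n1n1formula}). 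Let $S\subseteq[0,1]$ be the set of $t$ for which a smooth solution with $\tilde\omega>0$ exists. Openness of $S$ is standard: the linearization is $v\mapsto \tilde g^{\overline{j}i}\partial_i\partial_{\overline j}v+(\text{first order in }v)-\lambda v$, a second-order linear elliptic operator whose zeroth-order coefficient $-\lambda<0$; by the maximum principle it has trivial kernel, hence is invertible on the appropriate H\"older spaces, and the implicit function theorem applies.

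The heart of the matter is closedness, i.e.\ a priori estimates for solutions $\varphi=\varphi_t$ independent of $t$. The $C^0$ estimate is immediate and this is where $\lambda>0$ pays off: at a maximum point of $\varphi$ one has $\sqrt{-1}\partial\bar\partial\varphi\le0$ and $\Delta\varphi\le0$, so $\tilde\omega\le\varpi$ there, whence $\lambda\varphi+tG\le\log\tfrac{\varpi^n}{\alpha^n}$ gives an upper bound for $\max\varphi$; the lower bound is symmetric. Thus $\|\varphi\|_{C^0}\le C$ with $C$ depending only on $\lambda$, $\|G\|_{C^0}$ and the background data, with no need for the Moser iteration or the delicate auxiliary-equation trick that the Gauduchon conjecture proof requires in the $\lambda=0$ case. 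The second-order estimate $\|\sqrt{-1}\partial\bar\partial\varphi\|_{C^0}\le C(1+\sup|\nabla\varphi|^2)$ I would obtain by the now-standard maximum principle argument applied to a quantity like $\log\tr_\alpha\tilde\omega-A\varphi$ (or its $e^{\,\cdot\,}$ analogue), exactly as in Tosatti--Weinkove and its descendants; the torsion terms of $\alpha$ and the first-order term $Z(\mathrm{d}\varphi)$ contribute only controllable lower-order errors, and the concavity of $\log\det$ on positive Hermitian matrices drives the estimate. The gradient estimate $\sup|\nabla\varphi|\le C$ then follows either by a blow-up argument (Liouville theorem for entire solutions) or directly by the Dinew--Ko\l odziej / Chu-type argument, and feeds back to close the $C^2$ bound.

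Once $\|\varphi\|_{C^2}\le C$ and $\tilde\omega>0$ is uniformly bounded above and below, the equation is uniformly elliptic with bounded coefficients; the complex Evans--Krylov theorem gives $\|\varphi\|_{C^{2,\alpha}}\le C$, and bootstrapping through Schauder estimates yields uniform $C^{k}$ bounds for all $k$. Hence $S$ is closed, so $S=[0,1]$ and a smooth solution at $t=1$ exists. Uniqueness is exactly the maximum-principle argument already given in the proof of Corollary \ref{corunique}: if $\varphi,\varphi'$ both solve \eqref{ma5}, subtract the two equations, apply the concavity of $\log\det$ to write the difference of the left-hand sides as $\int_0^1 a^{\overline j i}_\tau\partial_i\partial_{\overline j}(\varphi'-\varphi)\,d\tau+(\text{first order})$ with $(a^{\overline j i}_\tau)$ positive definite, and evaluate at the max and min of $\varphi'-\varphi$ together with the strictly monotone term $\lambda(\varphi'-\varphi)$ to conclude $\varphi'\equiv\varphi$. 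The main obstacle I anticipate is the interaction between the non-closedness of $\alpha$ (torsion terms) and the first-order operator $Z(\mathrm{d}\varphi)$ in the second-order estimate; handling those error terms cleanly — rather than any conceptual difficulty — is the technical crux, and it is precisely what the machinery developed for the Gauduchon conjecture is designed to absorb.
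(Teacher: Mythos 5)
Your overall strategy—continuity method, $C^0$ estimate via the maximum principle exploiting $\lambda>0$, Evans--Krylov, Schauder bootstrap, uniqueness by the maximum principle—is the same as the paper's, and the $C^0$ estimate, gradient estimate (blow-up argument), and uniqueness arguments you sketch are all correct and match what the paper does. However, there are two concrete gaps.

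First, your starting point for the continuity path is wrong. You set up the family $\log\tfrac{\tilde\omega(\varphi_t)^n}{\alpha^n}=\lambda\varphi_t+tG$ and claim $t=0$ is solved by $\varphi_0=\tfrac{1}{\lambda}\log\tfrac{\varpi^n}{\alpha^n}$. But this function is not constant, so $\Delta\varphi_0$, $\sqrt{-1}\partial\bar\partial\varphi_0$, and $Z(\mathrm{d}\varphi_0)$ are nonzero, whence $\tilde\omega(\varphi_0)\ne\varpi$ and the left-hand side is \emph{not} $\log\tfrac{\varpi^n}{\alpha^n}$. The paper avoids this by deforming the right-hand side: with $G_0:=\log\tfrac{\varpi^n}{\alpha^n}$ one considers $\lambda\varphi+(1-t)G_0+tG$, and at $t=0$ the constant $\varphi\equiv 0$ gives $\tilde\omega=\varpi$ and both sides reduce to $G_0$. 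This is an easy fix, but as written your $0\in S$ claim fails.

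Second, and more substantially, your proposed second-order estimate via ``a quantity like $\log\mathrm{tr}_\alpha\tilde\omega-A\varphi$'' is not the right tool for this equation; that test function suffices for the Tosatti--Weinkove $(n-1)$-plurisubharmonic Monge--Amp\`ere equation \emph{without} the gradient operator $Z(\mathrm{d}\varphi)$, but not here. The presence of $Z(\mathrm{d}\varphi)$ is precisely why Sz\'ekelyhidi--Tosatti--Weinkove \cite{stw1503} had to use the Sz\'ekelyhidi/Guan-style machinery: the quantity is $H=\log\lambda_1+\varsigma(|\partial\varphi|_\alpha^2)+\psi(\varphi)$ with $\varsigma$ and $\psi$ chosen with specific curvature of convexity (not $-A\varphi$), one works with the largest eigenvalue $\lambda_1$ of $g_{i\bar j}=\chi_{i\bar j}+\varphi_{i\bar j}+W_{i\bar j}(\mathrm{d}\varphi)$ rather than the trace, one needs the linear reparametrization $P:\lambda\mapsto\mu$ relating $f$ to $\tilde f$ and the resulting inequalities on $f_k$, the perturbation $\phi\geq\lambda_1$ to handle non-smoothness of $\lambda_1$, and the dichotomy proposition that either (a) or (b) holds when $|\lambda|\geq R$. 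You acknowledge that the interaction of torsion with $Z(\mathrm{d}\varphi)$ is ``the technical crux'' to be ``absorbed by the Gauduchon machinery,'' but the issue is not error terms being absorbed by a simpler argument—it is that the simpler argument does not close at all, and a qualitatively different test function and eigenvalue analysis are required. The paper's proof of Theorem~\ref{thmuse} devotes essentially all of its length to reproducing this $\lambda_1$-based estimate (the target bound being the conditional $\sup_M|\partial\bar\partial\varphi|_\alpha\leq CK$ with $K=1+\sup_M|\partial\varphi|_\alpha^2$), with the only modifications from \cite{stw1503} being that $\nabla_i h$ and $\nabla_{\bar j}\nabla_i h$ pick up extra $O(1+|\partial\varphi|_\alpha)$ and $O(\lambda_1)$ contributions from $\lambda\varphi$ and that the term $F^{k\bar k}u_{k\bar k}$ is replaced by $L(\varphi)$. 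Your sketch would need to be replaced wholesale by that argument.

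A minor further point: ``trivial kernel, hence invertible'' for the linearized operator requires Fredholm index zero, not just injectivity; the paper secures this by a homotopy $B_t=(1-t)B_0+tB$ from the self-adjoint model $B_0=\Delta_\Theta-\lambda$ together with a $t$-uniform Schauder estimate. This is routine but worth saying.
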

\begin{proof}
For $\lambda=0$, Equation \eqref{ma5} is solved by \cite{stw1503} (cf.\cite{zhengimrn}). Here we use the method modified from \cite{stw1503} (cf.\cite{fenggezhengcmabd1}).

We use the method of continuity to solve \eqref{ma5}. We study a family of equations for $ \varphi:=\varphi(t)  \in  C^{2,\gamma}(M,\mathbb{R})$ for some $\gamma\in(0,1)$ fixed
\begin{equation}
\label{mat}
\log \frac{\left(\varpi+\frac{1}{n-1}\left[(\Delta \varphi )\alpha-\sqrt{-1}\partial\bar\partial \varphi \right]+Z(\mathrm{d}\varphi )\right)^n}{\alpha^n} = \lambda \varphi +(1-t)G_0 +tG,
\end{equation}
where
$
G_0:=\log \frac{\varpi^n}{\alpha^n}
$.
We set
\begin{equation*}
\mathscr{T}:=\Big\{t\in [0,1]:\,\mbox{there exists}\, \varphi \in C^{2,\gamma}(M,\mathbb{R})\,\mbox{solves \eqref{mat}}  \Big\}.
\end{equation*}
The definition of $G_0$ yields that $0\in \mathscr{T}$ since we can take $\varphi(0)=0$.
It suffices to show that $\mathscr{T}$ is both open and closed.

For the openness of $\mathscr{T}$, we consider the map
\begin{align*}
\Psi:\,& [0,1] \times C^{2,\gamma}(M,\mathbb{R}) \rightarrow C^{\gamma}(M,\mathbb{R}),\\
&(t, \varphi )\mapsto\log \frac{\left(\varpi+\frac{1}{n-1}\left[(\Delta \varphi)\alpha-\sqrt{-1}\partial\bar\partial \varphi\right]+Z(\mathrm{d}\varphi)\right)^n}{\alpha^n} - \lambda \varphi-(1-t)G_0 -tG.
\end{align*}

Assume $t_0 \in \mathscr{T}$, and that (\ref{mat}) has a corresponding solution $\varphi_{0}$.   Write
$$
\tilde\omega_0:=\varpi+\frac{1}{n-1}\left[(\Delta \varphi_0)\alpha-\sqrt{-1}\partial\bar\partial \varphi_0\right]+Z(\mathrm{d}\varphi_0)
=:\sqrt{-1} h_{i\overline{j}}\mathrm{d} z^i\wedge\mathrm{d} \overline{z}^j>0.
$$
The derivative of  $\Psi$ in the second variable at $(t_0, \varphi_0)$ is the linear operator
$B: C^{2,\gamma}(M,\mathbb{R}) \rightarrow C^{\gamma}(M,\mathbb{R})$ given by
$$B (u) = P(u) - \lambda u,$$
where
\begin{equation}
\label{pu}
P(u):=\Theta^{\bar ji} \partial_{\bar j}\partial_iu+h^{\bar ji}\left(Z_{i\bar j}^p\partial_pu+\overline{Z_{j\bar i}^p}\partial_{\bar p}u\right).
\end{equation}
with
$$
\Theta^{\bar ji}=\frac{1}{n-1}\left((\mathrm{tr}_{\tilde\omega_0}\alpha)\alpha^{\bar ji}-h^{\bar ji}\right).
$$
Since $(\Theta^{\bar ji})>0$, both $P$ and $B$ are strictly elliptic operators.

We define an operator $B_0$ by
$$
B_0:\,C^{2,\gamma}(M,\mathbb{R}) \rightarrow C^{\gamma}(M,\mathbb{R}),\quad
B_0(u):=\Delta_{\Theta}u-\lambda u,
$$
where $\Delta_{\Theta}$ is the Laplace-Beltrami operator of the Riemannian metric $\Theta$.
Since $\lambda>0$, it follows from \cite[Theorem 4.18]{aubinma1982} that $B_0$ is an isomorphism map.

Let us consider a family of strictly elliptic operators $B_t:=(1-t)B_0+tB$ for $t\in[0,1]$. For each $u\in C^{2,\gamma}(M,\mathbb{R})$,  the Schauder theory (see for example \cite{gt1998}) yields that
\begin{equation}
\label{schauderbt}
\|u\|_{C^{2,\gamma}(M,\mathbb{R})}\leq C\left(\sup_M|B_t(u) |+\|B_t(u)\|_{C^\gamma(M,\mathbb{R})}\right)
\end{equation}
for some uniform constant $C>0$ independent of $t$.
Since $B_0$ is an isomorphism map, it follows from \eqref{schauderbt} and \cite[Thoerem 5.2]{gt1998} that $B_t$ is isomorphism for each $t\in [0,1]$ and so is $B$, which, together with the Inverse Function Theorem, yields that  $\mathscr{T}$ is open.


For the closeness of $\mathscr{T}$, we need a priori  estimates on $\varphi$ solving \eqref{mat} independent of $t$.

\textbf{A uniform bound }
\begin{equation}
\label{equzeroestimate}\sup_M|\varphi|  \leq C
\end{equation}
follows immediately from the maximum principle. Here and henceforth, $C$ will denote a uniform constant independent of $t$ that may change from line to line.

We denote $h:=\lambda\varphi+(1-t)G_0 +tG$.

\textbf{For the second order estimate}, we claim
\begin{equation}
\label{equ2ndestimate}
\sup_M|\partial\bar\partial \varphi|_\alpha\leq CK,
\end{equation}
with $K:=1+\sup_M|\partial \varphi|_\alpha^2$.

We will deduce the estimate \eqref{equ2ndestimate}  by making use of  \cite{stw1503} (cf.\cite{gaborjdg,fenggezhengcmabd1}). The main difference is that when we apply covariant derivative  to both sides of \eqref{ma5}, $\nabla_ih$ is bounded by $O(1+|\partial \varphi|_\alpha)$ rather that $O(1)$ and $\nabla_{\bar j}\nabla_i h$ is bounded by $O(\lambda_1)$ rather than $O(1)$, which are harmless in the following arguments.
For the sake of completeness, we include here a brief sketch of the proof from \cite{fenggezhengcmabd1,stw1503} motivated by \cite{twarxiv1906}.

We need some preliminaries. For each real $(1,1)$ form $\xi$, we define
$$
P_{\alpha}(\xi):=\frac{1}{n-1}\Big((\mathrm{tr}_{\alpha}\xi)\alpha-\xi\Big)=\frac{1}{(n-1)!}\ast(\xi\wedge\alpha^{n-2}).
$$
A direct calculation yields that $\mathrm{tr}_{\alpha}\xi=\mathrm{tr}_{\alpha}\left(P_{\alpha}(\xi)\right)$ and that
$$
\xi=\left(\mathrm{tr}_{\alpha}\left(P_{\alpha}(\xi)\right)\right)\alpha-(n-1)P_{\alpha}(\xi).
$$
We set
\begin{align*}
\chi_{i\overline{j}}=&(\mathrm{tr}_{\alpha}\varpi)\alpha_{i\overline{j}}-(n-1)\varpi_{i\overline{j}},
\end{align*}
with $P_{\alpha}(\chi)=\varpi$, and
$$
W_{i\overline{j}}(\mathrm{d}\varphi)=\left(\mathrm{tr}_{\alpha}Z(\mathrm{d}\varphi)\right)
\alpha_{i\overline{j}}-(n-1)Z_{i\overline{j}}(\mathrm{d}\varphi)
=:W_{i\overline{j}}^p\varphi_p+\overline{W^p_{j\overline{i}}}\varphi_{\overline{p}}.$$
Note that
\begin{equation}
 \label{eq:c1} Z(\mathrm{d}\varphi):= P_\omega(W(\mathrm{d}\varphi))
 = \frac{1}{n-1}\Big( (\mathrm{tr}_\omega
  W(\mathrm{d}\varphi))\omega - W(\mathrm{d}\varphi)\Big).
 \end{equation}
Then we set
\begin{align}\label{defnvartheta}
g_{i\overline{j}}
=\chi_{i\overline{j}}+\varphi_{i\overline{j}}+W_{i\overline{j}}(\mathrm{d}\varphi)
\end{align}
with $P_{\alpha}(g_{i\overline{j}})=\tilde g_{i\overline{j}}$.

In orthonormal coordinates for $\alpha$ at each given point,  it follows that the component $Z_{i\overline{j}}$ is independent of $\varphi_{\overline{i}}$ and $u_{j}$, and that $\nabla_{i} Z_{i\overline{i}}$ is independent of $\varphi_i,\varphi_{\overline{i}}, \varphi_{i\overline{i}},\nabla_{i}\varphi_i $ (see \cite{stw1503}).

Moreover, $\nabla_i\nabla_{\overline{i}} Z_{i \overline{i}}$ is independent of $\nabla_i\varphi_i,\nabla_{\overline{i}}\varphi_{\overline{i}},
\nabla_{\overline{i}}\nabla_i\varphi_i$ and $\nabla_{\overline{i}}\varphi_{i\overline{i}}$. For each index $p$, $\nabla_iZ_{p\overline{i}}$ is independent of $\nabla_{i}\varphi_i$. In what follows, we will use these properties directly and do not prove them again.

Denote $\left(\tilde B_{i}{}^j\right)=\left(\tilde g_{i\overline{\ell}}\alpha^{\overline{\ell}j}\right)$ which can be viewed as the endomorphism of $T^{1,0}M$. This endomorphism is Hermitian with respect to the Hermitian metric $\alpha$.
We define
$$
\tilde F(\tilde B)=\log \det \tilde B=\log(\mu_1\cdots\mu_n)=:\tilde f(\mu_1,\cdots,\mu_n),
$$
where $\mu_1,\cdots,\mu_n$ denote the eigenvalues of $\left(\tilde B_{i}{}^j\right)$. Then \eqref{ma5} can be rewritten as
\begin{align}\label{udef2nd1}
\tilde F (\tilde B)=h.
\end{align}
For   $\tilde f$ and $h$,  there holds
\begin{enumerate}
\item[(i)] $\tilde f$ is defined on
$$
\Gamma_n=\Big\{(x_1,\cdots,x_n)\in \mathbb{R}^n:\;x_i>0,\;1\leq i\leq n\Big\}.
$$
\item[(ii)] $\tilde f$ is symmetric, smooth, concave and increasing, i.e., $\tilde f_i>0$ for $1\leq i\leq n$.
\item[(iii)] $\sup_{\partial \Gamma_n}\tilde f\leq \inf_{M}h$.
\item[(iv)] For each $\mu \in \Gamma_n$, we get $\lim_{t\rightarrow\infty}\tilde f(t\mu)=\sup_{\Gamma_n}\tilde f=\infty$.
\item[(v)]$h$ is bounded on $M$ because of the uniform estimate  $\sup_M|\varphi|\leq C$.
\end{enumerate}
We also define
$$
F (A):=\tilde F(\tilde B)=:f(\lambda_1,\cdots,\lambda_n),
$$
where $(A_{i}{}^{j})=\left(g_{i\overline{\ell}}\alpha^{\overline{\ell}j}\right)$, which is also an endomorphism of $T^{1,0}M$   with respect to the Hermitian metric $\alpha$, and let $\lambda_1,\cdots,\lambda_n$ denote its eigenvalues. There exists a map
\begin{align}
\label{Pdefn}
P:\;\mathbb{R}^n\longrightarrow \mathbb{R}^n,\quad \mu_k=\frac{1}{n-1}\sum\limits_{i\neq k}\lambda_{i},
\end{align}
induced   by $P_{\alpha}$ above. Then we have
$$
f(\lambda_1,\cdots,\lambda_n)=\tilde f\circ P(\lambda_1,\cdots,\lambda_n)
$$
defined on $\Gamma:=P^{-1}(\Gamma_n)$. Note that  $f$  satisfies the same conditions as $\tilde f$. Then \eqref{udef2nd1} can also be rewritten as
\begin{equation}
\label{nablakh}
F(A)=h.
\end{equation}
Since
\begin{align}\label{daotildef}
\tilde f_i=\frac{1}{\mu_i},\quad 1\leq i\leq n,
\end{align}
we can get
\begin{align}\label{fk}
f_i=\frac{1}{n-1}\sum\limits_{k\neq i}\frac{1}{\mu_k},\quad 1\leq i\leq n.
\end{align}
A direct calculation shows that
\begin{equation}
\label{lambdamudefn}
\mu_j=\frac{1}{n-1}\sum_{k\not=j}\lambda_k,\quad \lambda_j=\sum_{k=1}^n\mu_k-(n-1)\mu_j.
\end{equation}
Suppose that $\lambda_1\geq \lambda_2\geq\cdots\geq \lambda_n\in \Gamma$. From the definition of $P$, \eqref{daotildef} and \eqref{fk}, we have
\begin{align}
0 <\mu_1\leq \mu_2\leq \cdots\leq \mu_n,\nonumber\\
\tilde f_1\geq \tilde f_2\geq \cdots\geq \tilde f_n,\nonumber\\
0<f_1\leq f_2\leq\cdots\leq f_n,\nonumber\\
\label{daoshueigenn}\sum\limits_{k=1}^n\lambda_kf_k=\sum\limits_{k=1}^n\mu_k\tilde f_k=n.
\end{align}
where we also use the fact that $\left(\tilde B_i{}^j\right)$ is positive definite. For $k\geq 2$, we have
\begin{align}\label{eq:fk1}
  0<\frac{\tilde f_1}{n-1}\leq f_k\leq \tilde f_1.
\end{align}
and
\begin{align}\label{eq:fk2}
\tilde f_k\leq (n-1)f_1,\quad k>1.
\end{align}
We have (see \cite{gaborjdg,stw1503})
\begin{prop}
For each $\mathbf{x}\in M$, choose orthonormal coordinates for $\alpha$ at $\mathbf{x}$, with $g$ defined as in \eqref{defnvartheta} is diagonal with eigenvalues $(\lambda_1,\cdots,\lambda_n)$. Then there exist uniform constants $R>0$ and $\kappa\in(0,1)$ such that if
$$
|\lambda|\geq R,
$$
then for $f(\lambda)=h$, there  holds two possibilities as follows.
\begin{enumerate}
\item[(a)]  We have
\begin{align}\label{partaformula}
\sum\limits_kf_{k}(\lambda)(\chi_{k\overline{k}}-\lambda_k)>\kappa \sum\limits_kf_k(\lambda).
\end{align}
\item[(b)]  Or we have
\begin{align}
f_k(\lambda)>\kappa\sum\limits_{i=1}^nf_{i}(\lambda)
\end{align}
for all $k=1,2,\cdots,n$.
\end{enumerate}
In addition, we have
\begin{equation}
\label{ftau}
\mathcal{F}(\lambda):=\sum_{i=1}^nf_i(\lambda)\geq \kappa>0.
\end{equation}
\end{prop}
We also need
\begin{lem}[Gerhardt \cite{gerhardt}]
\label{lemgerhardt}
If $F(A)=f(\lambda_1,\cdots, \lambda_n)$ in terms of a symmetric funtion of the eigenvalues, then at a diagonal matrix $(A_{i}{}^j)$ with distinct eigenvalues we have
\begin{align}
\label{f1stdaoshu}F^{ij}=&\delta_{ij} f_i,\\
\label{f2nddaoshu}F^{ij,rs}=&f_{ir}\delta_{ij}\delta_{rs}+\frac{f_i-f_j}{\lambda_i-\lambda_j}(1-\delta_{ij})\delta_{is}\delta_{jr},
\end{align}
where
$$
F^{ij}=\frac{\partial F}{\partial  A_{i}{}^j}, \quad F^{ij,rs}=\frac{\partial^2F}{\partial  A_{i}{}^j\partial  A_{r}{}^s}.
$$
\end{lem}
We write $F^{i\bar q}:=F^{ij }\alpha^{\bar q j}$.

It suffices to prove
$$\lambda_1\leq CK. $$
Indeed, since $\sum\limits_{i=1}^n\lambda_i=\sum\limits_{i=1}^n\mu_i>0$, if $\lambda_1\leq CK$ then so is $|\lambda_k|,\,k=2,\cdots,n$, which yields \eqref{equ2ndestimate}. We consider the quantity
$$
H(\mathbf{x}):= \log \lambda_1(\mathbf{x})+\varsigma(|\partial \varphi|_g^2(\mathbf{x})) +\psi(\varphi(\mathbf{x})),\quad \forall\,\mathbf{x}\in M,
$$
where we set
 $$
\varsigma(s)=-\frac{1}{2}\log\left(1-\frac{s}{2K}\right),\quad\psi(s)=D_1e^{-D_2s},
$$
with sufficiently large uniform constants $D_1,D_2>0$ to be determined later.
A direct calculation shows that
$$
\varsigma\left(|\partial u|^2_{\alpha}\right)\in[0,\,2\log 2]
$$
and
\begin{align*}
\frac{1}{4K}<\varsigma'<\frac{1}{2K},\quad \varsigma''=2(\varsigma')^2>0.
\end{align*}
We assume that $H$ attains its maximum at the interior point $\mathbf{x}_0\in M.$ It suffices to show that there holds $\lambda_1\leq CK$ at $\mathbf{x}_0$ for some uniform constant $C.$ In what follows we may assume that $\lambda_1\gg K$ at the point $\mathbf{x}_0$ without loss of generality and hence \eqref{ftau} holds.
In the followup, we will calculate at the point $\mathbf{x}_0$ under the local coordinate $(z_1,\cdots,z_n)$ for which $\alpha$ is the identity and $(A_i{}^j)$ is diagonal with entries $A_i{}^i=g_{i\bar i}=\lambda_i$ for $1\leq i\leq n,$ unless otherwise indicated. Note that $(F^{i\bar j})$ is also diagonal at the point $\mathbf{x}_0$ (see \cite{fenggezhengcmabd1}).

Since $\lambda_1$ may not be smooth at $\mathbf{x}_0,$ we introduce a smooth function $\phi$ on $M$ by (cf. \cite[Lemma 5]{brendleetx2017} and \cite[Proof of Theorem 3.1]{twarxiv1906})
 \begin{equation}
 \label{defnphi}
H(\mathbf{x}_0)\equiv \log \phi(\mathbf{x})+\varsigma(|\partial \varphi|_\alpha^2(\mathbf{x})) +\psi(\varphi(\mathbf{x})),\quad \forall\,\mathbf{x}\in M.
\end{equation}
Note that $\phi$ satisfies
\begin{equation}
\phi(\mathbf{x})\geq \lambda_1(\mathbf{x})\quad \forall\,\mathbf{x}\in M,\quad \phi(\mathbf{x}_0)=\lambda_1(\mathbf{x}_0).
\end{equation}
We define a strictly elliptic operator  $L$ which is the same as \eqref{pu} by
\begin{align}
\label{defnl}
L(u)=&F^{ij}(A)\alpha^{\overline{q}j}\left(\partial_i\partial_{\overline{q}}u+W_{i\bar q}(\mathrm{d} u)\right)\\
=&F^{ij}(A)\alpha^{\overline{q}j}\left(\nabla_i\nabla_{\overline{q}}u
+W_{i\bar q}^p(\nabla_p u)+\overline{W_{q\bar i}^p}(\nabla_{\bar p} u)\right),\quad \forall\;u\in C^2(M,\mathbb{R}).\nonumber
\end{align}
Applying the operator $L$ defined in \eqref{defnl} to \eqref{defnphi}, one infers
\begin{align}
\label{0lhatx0}
0=&\frac{1}{\lambda_1}L(\phi)-\frac{1}{\lambda_1^2}F^{i\bar i}|\nabla_i\phi|^2+\varsigma'L(|\partial \varphi|_\alpha^2)+\psi'L(\varphi)+\psi''F^{i\bar i}|\nabla_i\varphi|^2\\
&+\varsigma''F^{i\bar i}
\left|\sum_p\big((\nabla_i\nabla_p\varphi)(\nabla_{\bar p}\varphi)
+(\nabla_p\varphi)(\nabla_{i}\nabla_{\bar p}\varphi)\big)\right|^2
 .\nonumber
\end{align}
Differentiating \eqref{defnphi} one can deduce
\begin{equation}
\label{nalbaiphi}
0=\frac{\nabla_i\phi}{\phi}+\varsigma'\left((\nabla_p\varphi)(\nabla_{\bar p}\nabla_i\varphi +(\nabla_{\bar p}\varphi)\nabla_i\nabla_{p}\varphi\right)+\psi'(\nabla_i\varphi).
\end{equation}
We have (see \cite[Lemma 5.3]{fenggezhengcmabd1})
\begin{lem}
\label{lemddbarphi}
Let $\mu$ denote the multiplicity of the largest eigenvalue of $(A_i{}^j)$ at $\mathbf{x}_0,$ so that
$\lambda_1=\cdots=\lambda_\mu>\lambda_{\mu+1}\geq \cdots\geq \lambda_n.$ Then at $\mathbf{x}_0,$ for each $i$ with $1\leq i\leq n,$ there hold
\begin{align}
\label{nablaiphi}
&\nabla_ig_{k\bar\ell}= (\nabla_i\phi) \alpha_{k\bar \ell},\quad \mbox{for}\quad 1\leq k,\,\ell\leq \mu,\\
\label{ddbarphi}
&\nabla_{\bar i}\nabla_i\phi \geq \nabla_{\bar i}\nabla_{i}g_{1\bar 1}+\sum_{q>\mu}\frac{\left|\nabla_ig_{q\bar 1}\right|^2+\left|\nabla_{\bar i}g_{q\bar 1}\right|^2}{\lambda_1-\lambda_q}.
\end{align}
\end{lem}
A direct calculation, together with the Ricci identity and the first Bianchi identity, yields that (see for example \cite{twcrelle})
\begin{align}
\label{ricciid2times}
\nabla_{\bar \ell}\nabla_k\nabla_{\bar j}\nabla_iu
=&\nabla_{\bar j}\nabla_i\nabla_{\bar \ell}\nabla_ku
+R_{k\bar\ell i}{}^p\nabla_{\bar j}\nabla_pu
-R_{i\bar j k}{}^p\nabla_{\bar\ell}\nabla_pu\\
&-T_{ki}^p\nabla_{\bar\ell}\nabla_{\bar j}\nabla_p u
-\overline{T_{\ell j}^q}\nabla_{k}\nabla_{\bar q}\nabla_i u
-T_{ki}^p\overline{T_{\ell j}^q}\nabla_{\bar q}\nabla_p u.\nonumber
\end{align}
It follows from  \eqref{defnvartheta} and \eqref{ricciid2times} that
\begin{align}
\label{fibaribariithetau1bar1}
F^{i\bar i}\nabla_{\bar i}\nabla_{i}g_{1\bar 1}
=&F^{i\bar i}\nabla_{\bar i}\nabla_{i}\chi_{1\bar 1}
+F^{i\bar i}\nabla_{\bar i}\nabla_i\left(W_{1\bar1}^p\nabla_p\varphi+\overline{W_{1\bar 1}^q}\nabla_{\bar q}\varphi\right)\\
&+F^{i\bar i}\nabla_{\bar 1}\nabla_{1}\nabla_{\bar i}\nabla_{i}\varphi
-F^{i\bar i}\left(T_{i1}^p\nabla_{\bar i}\nabla_{\bar 1}\nabla_p\varphi
+\overline{T_{i1}^q}\nabla_i\nabla_{\bar q}\nabla_{ 1}\varphi\right)\nonumber\\
&+F^{i\bar i}\left(R_{i\bar i1}{}^p\nabla_{\bar 1}\nabla_p\varphi-R_{1\bar 1i}{}^p\nabla_{\bar i}\nabla_p\varphi
-T_{i1}^p\overline{T_{i1}^q}\nabla_{\bar q}\nabla_p\varphi
\right).\nonumber
\end{align}
Differentiating both sides of \eqref{nablakh} by $\nabla_{\bar \ell}$ gives
\begin{equation}
\label{nablabarknablakh}
\nabla_{\bar \ell}\nabla_{k}h
= F^{i\bar j,p\bar q}\left(\nabla_kg_{i\bar j}\right)
\left(\nabla_{\bar\ell}g_{p\bar q}\right)
+F^{i\bar j}
\big(
\nabla_{\bar \ell}\nabla_k\chi_{i\bar j}
+\nabla_{\bar \ell}\nabla_k\nabla_{\bar j}\nabla_i\varphi
+\nabla_{\bar \ell}\nabla_k(W_{i\bar j}(\mathrm{d}\varphi))
\big).
\end{equation}
Substituting \eqref{nablabarknablakh} with $k=\ell=1$ into \eqref{fibaribariithetau1bar1} yields
\begin{align}
\label{fibarinalbabarinablarivarthetau1bar1}
&F^{i\bar i}\nabla_{\bar i}\nabla_{i}g_{1\bar 1}\\
=&-F^{i\bar j,p\bar q}\left(\nabla_1g_{i\bar j}\right)
\left(\nabla_{\bar1}g_{p\bar q}\right)\nonumber\\
 &+F^{i\bar i}\left(\nabla_{\bar i}\nabla_{i}\chi_{1\bar 1}
-\nabla_{\bar 1}\nabla_{1}\chi_{i\bar i}\right)
+F^{i\bar i}\left(\nabla_{\bar i}\nabla_i\left(W_{1\bar1}(\mathrm{d}\varphi)\right)-\nabla_{\bar 1}\nabla_1\left(W_{i\bar i}(\mathrm{d}\varphi)\right)\right)\nonumber\\
&+\nabla_{\bar 1}\nabla_1h-F^{i\bar i}\left(T_{i1}^p\nabla_{\bar i}\nabla_{\bar 1}\nabla_p\varphi
+\overline{T_{i1}^q}\nabla_{i}\nabla_{\bar q}\nabla_1\varphi\right)\nonumber\\
&+F^{i\bar i}\left(R_{i\bar i1}{}^p\nabla_{\bar 1}\nabla_p\varphi-R_{1\bar 1i}{}^p\nabla_{\bar i}\nabla_p\varphi
-T_{i1}^p\overline{T_{i1}^q}\nabla_{\bar q}\nabla_p\varphi
\right).\nonumber
\end{align}
It follows from  \eqref{defnvartheta}, \eqref{nablaiphi} and Young's inequality that
\begin{align}
\label{fibariti1pnablainablabar1nablapurealpart}
&F^{i\bar i}\left(T_{i1}^p\nabla_{\bar i}\nabla_{\bar 1}\nabla_p\varphi
+\overline{T_{i1}^q}\nabla_{i}\nabla_{\bar q}\nabla_1\varphi\right)\\
=&2\Re\left(F^{i\bar i}\overline{T_{i1}^q}\nabla_{i}g_{1\bar q}
-F^{i\bar i}\overline{T_{i1}^q}\left(
\left(\nabla_iW_{1\bar q}^p\right)\nabla_p\varphi
+W_{1\bar q}^p\nabla_i\nabla_p\varphi
+\overline{\nabla_{\bar i}W_{1\bar q}^p}\nabla_{\bar p}\varphi
+\overline{W_{1\bar q}^p}\nabla_i\nabla_{\bar p}\varphi
\right)\right) \nonumber\\
\geq &2\Re\left(F^{i\bar i}\overline{T_{i1}^1}\nabla_{i}g_{1\bar 1}\right)
+2\Re\left(F^{i\bar i}\sum_{q>\mu}\overline{T_{i1}^q}\nabla_{i}g_{1\bar q}\right)\nonumber\\
&-C\lambda_1\mathcal{F}-C\sum_{p}F^{i\bar i}|\nabla_i\nabla_p\varphi|\nonumber\\
\geq&-CF^{i\bar i}|\nabla_{i}g_{1\bar 1}|-\sum_{q>\mu}F^{i\bar i}\frac{|\nabla_{i}g_{1\bar q}|^2}{\lambda_1-\lambda_q}-C\lambda_1\mathcal{F}-C\sum_{p}F^{i\bar i}|\nabla_i\nabla_pu|,\nonumber
\end{align}
where we use the fact that $\lambda_1\gg K>1$ and that both $|\varphi_{i\bar j}|$ and $\lambda_q$ ($q>\mu$) can be controlled by $\lambda_1.$
It follows from   \eqref{eq:c1}, \eqref{lambdamudefn}, \eqref{fk}, \eqref{eq:fk1}, \eqref{eq:fk2} that (see the argument in \cite{stw1503})
\begin{align}
\label{eq:fk3}
&F^{i\bar i}W_{i\bar i}(\nabla g_{1\bar 1})\\
=&\tilde F^{i\bar i}Z_{i\bar i}(\nabla g_{1\bar 1})\nonumber\\
=&\tilde F^{1\bar 1}\sum_{p>1}\left(Z_{1\bar1}^p \nabla_pg_{1\bar 1}+\overline{Z_{1\bar1}^p}\nabla_{\bar p} g_{1\bar 1}\right)
+\sum_{i>1}\tilde F^{i\bar i}Z_{i\bar i}(\nabla g_{1\bar 1})\nonumber\\
\leq&C\sum_{p>1}F^{p\bar p}|\nabla_p g_{1\bar 1}|
+C F^{1\bar 1} \sum_{q }|\nabla_q g_{1\bar 1}|\nonumber\\
=&C\sum_{p>1}F^{p\bar p}|\nabla_p g_{1\bar 1}|
+C F^{1\bar 1}  |\nabla_1 g_{1\bar 1}|
+C F^{1\bar 1} \sum_{q>1 }|\nabla_q g_{1\bar 1}|\nonumber\\
\leq&C\sum_{p>1}F^{p\bar p}|\nabla_p g_{1\bar 1}|
+C F^{1\bar 1}  |\nabla_1 g_{1\bar 1}|
+C \sum_{q>1 }F^{q\bar q}|\nabla_q g_{1\bar 1}|\nonumber\\
\leq&C\sum_pF^{p\bar p}|\nabla_p g_{1\bar 1}|,\nonumber
\end{align}
\begin{equation}
\label{stwacta1fqqbar11wqq}
F^{i\bar i}\nabla_{\bar i}\nabla_i(W_{1\bar 1}(\mathrm{d}\varphi))
\geq
-C\left(F^{i\bar i}\sum_p|\nabla_i\nabla_p\varphi|+\lambda_1\mathcal{F}\right)
\end{equation}
and
\begin{equation}
\label{stwacta1fqqqqwbar11}
F^{i\bar i}\nabla_{\bar 1}\nabla_1(W_{i\bar i}(\mathrm{d}\varphi))
\leq
 C\left(F^{i\bar i}\left(|\nabla_ig_{1\bar 1}|+\sum_p|\nabla_i\nabla_p\varphi|\right)+\lambda_1\mathcal{F}\right),
\end{equation}
where we also use the fact that $\lambda_1\gg K>1$ and that $|\varphi_{i\bar j}|$ can be controlled by $\lambda_1.$
Note that \eqref{stwacta1fqqbar11wqq} and \eqref{stwacta1fqqqqwbar11} can be found in \cite{stw1503} directly (cf. \cite{zhengimrn}).

Applying the operator $L$ defined in \eqref{defnl} to $\phi,$ we can deduce from \eqref{ftau}, \eqref{nablaiphi}, \eqref{ddbarphi}, \eqref{fibarinalbabarinablarivarthetau1bar1}, \eqref{fibariti1pnablainablabar1nablapurealpart}, \eqref{eq:fk3}, \eqref{stwacta1fqqbar11wqq} and \eqref{stwacta1fqqqqwbar11} that
\begin{align}
L(\phi)
\label{estimateoflphi}
=&F^{i\bar i}\left(\nabla_{\bar i}\nabla_i\phi+W_{i\bar i}(\nabla\phi)\right)\\
\geq&F^{i\bar i}\nabla_{\bar i}\nabla_ig_{1\bar1}
+F^{i\bar i}W_{i\bar i}(\nabla g_{1\bar1})\nonumber\\
&+\sum_{q>\mu}F^{i\bar i}\frac{|\nabla_ig_{q\bar 1}|^2+|\nabla_{\bar i}g_{q\bar 1}|^2}{\lambda_1-\lambda_q}\nonumber\\
\geq&-F^{i\bar j,p\bar q}\left(\nabla_1g_{i\bar j}\right)
\left(\nabla_{\bar1}g_{p\bar q}\right)\nonumber\\
&+2\Re\left(F^{i\bar i}\overline{T_{i1}^q}\nabla_{i}g_{1\bar q}\right)
+\sum_{q>\mu}F^{i\bar i}\frac{|\nabla_ig_{q\bar 1}|^2+|\nabla_{\bar i}g_{q\bar 1}|^2}{\lambda_1-\lambda_q}\nonumber\\
&-C F^{i\bar i}\left(|\nabla_ig_{1\bar 1}|+\sum_p|\nabla_i\nabla_p\varphi|\right)-C\lambda_1\mathcal{F}  \nonumber\\
\geq&-F^{i\bar j,p\bar q}\left(\nabla_1g_{i\bar j}\right)
\left(\nabla_{\bar1}g_{p\bar q}\right)\nonumber\\
&-C F^{i\bar i}\left(|\nabla_ig_{1\bar 1}|+\sum_p|\nabla_i\nabla_p\varphi|\right)-C\lambda_1\mathcal{F}.  \nonumber
\end{align}
Here we remind that $\nabla_1\nabla_{\bar 1}h =O(\lambda_1)$ is absorbed into $C\lambda_1\mathcal{F}$ by \eqref{ftau}.

From \eqref{0lhatx0} and \eqref{estimateoflphi}, one can infer that
\begin{align}
\label{0lhatx01}
0\geq&-\frac{1}{\lambda_1}F^{i\bar j,p\bar q}\left(\nabla_1g_{i\bar j}\right)
\left(\nabla_{\bar1}g_{p\bar q}\right)
-\frac{1}{\lambda_1^2}F^{i\bar i}|\nabla_i\phi|^2  \\
&+\varsigma'L(|\partial \varphi|_g^2)+\varsigma''F^{i\bar i}
\left|\sum_p\big((\nabla_i\nabla_p\varphi)(\partial_{\bar p}\varphi)
+(\nabla_p\varphi)(\nabla_{i}\nabla_{\bar p}\varphi)\big)\right|^2
  \nonumber\\
&+\psi'L(\varphi)+\psi''F^{i\bar i}|\nabla_i\varphi|^2-\frac{C}{\lambda_1} F^{i\bar i}\left(|\nabla_ig_{1\bar 1}|+\sum_p|\nabla_i\nabla_p\varphi|\right)-C \mathcal{F}.\nonumber
\end{align}
On the other hand, we can still have (see \cite[Lemma 5.2]{fenggezhengcmabd1})
\begin{lem}
\label{lemlpartialug2}
There exists a uniform constant $C>0$ such that
\begin{align}
\label{lpartialug2}
 L(|\partial \varphi|_\alpha^2)
=&\sum_kF^{i\bar i}\left(|\nabla_i\nabla_k\varphi|^2+|\nabla_i\nabla_{\bar k}\varphi|^2 \right)+2\Re\left(\sum_k\left(\nabla_k\varphi\right)\left(\nabla_{\bar k}h\right)\right) \\
&+F^{i\bar i}\left(\nabla_k\varphi\right)\overline{T_{ki}^p}(\nabla_i\nabla_{\bar p}\varphi)
+F^{i\bar i}\left(\nabla_{\bar k}\varphi\right)T_{ki}^p(\nabla_p\nabla_{\bar i}\varphi)
+O(|\partial u|_g^2)\mathcal{F}\nonumber\\
\geq&\sum_kF^{i\bar i}\left(|\nabla_i\nabla_k\varphi|^2+(1-\varepsilon)|\nabla_i\nabla_{\bar k}\varphi|^2 \right)\nonumber\\
&+2\Re\left(\sum_k\left(\nabla_k\varphi\right)\left(\nabla_{\bar k}h\right)\right)
-C\varepsilon^{-1}|\partial \varphi|_\alpha^2 \mathcal{F},\nonumber
\end{align}
where $\varepsilon$ is an arbitrary constant with $ \varepsilon\in (0,1/2].$
\end{lem}
Note that the term
$$
2\Re\left(\sum_k\left(\nabla_k\varphi\right)\left(\nabla_{\bar k}h\right)\right)
$$
which is $O(K)$
can be absorbed into $C\varepsilon^{-1}|\partial \varphi|_\alpha^2 \mathcal{F}$ with a larger uniform constant $C$ by \eqref{ftau} since we assume that $K\gg1$.
Since $\varsigma'\geq 1/(4K),$  it follows from \eqref{lpartialug2} with $\varepsilon=1/3$ and Young's inequality that
\begin{align}
\label{lgradientusquareuse2}
&\varsigma'L(|\partial \varphi|_\alpha^2)-\frac{C}{\lambda_1} F^{i\bar i}\sum_p|\nabla_i\nabla_p\varphi|\\
\geq&\frac{1}{6K}\sum_kF^{i\bar i}\left(|\nabla_i\nabla_k\varphi|^2 +|\nabla_i\nabla_{\bar k}\varphi|^2 \right)-C\mathcal{F},\nonumber
\end{align}
where we also use the fact that $\lambda_1\gg K>1.$
Hence we can deduce from \eqref{0lhatx01} and \eqref{lgradientusquareuse2} that
\begin{align}
\label{0lhatx02}
0\geq&-\frac{1}{\lambda_1}F^{i\bar j,p\bar q}\left(\nabla_1g_{i\bar j}\right)
\left(\nabla_{\bar1}g_{p\bar q}\right)
-\frac{1}{\lambda_1^2}F^{i\bar i}|\nabla_i\phi|^2  \\
&+\frac{1}{6K}\sum_kF^{i\bar i}\left(|\nabla_i\nabla_k\varphi|^2+|\nabla_i\nabla_{\bar k}\varphi|^2 \right)\nonumber\\
&+\varsigma''F^{i\bar i}
\left|\sum_p\big((\nabla_i\nabla_p\varphi)(\nabla_{\bar p}\varphi)
+(\nabla_p\varphi)(\nabla_{i}\nabla_{\bar p}\varphi)\big)\right|^2
  \nonumber\\
&+\psi'L(\varphi)+\psi''F^{i\bar i}|\nabla_i\varphi|^2-\frac{C}{\lambda_1} F^{i\bar i} |\nabla_ig_{1\bar 1}| -C \mathcal{F}.\nonumber
\end{align}
From \eqref{nablaiphi}, we know that $\nabla_i\phi=\nabla_ig_{1\bar 1}.$ This, together with \cite[Equation (3.7)]{stw1503}, yields that \eqref{0lhatx02} is the same as \cite[Equation (3.28)]{stw1503} essentially after changing  $\nabla_p\nabla_p\varphi$ and $\nabla_ig_{1\bar 1}$ into $\partial_i\partial_p \varphi$ and $\partial_ig_{1\bar 1}$ respectively, and changing the coefficient of $\mathcal{F}$ into a larger uniform constant (only replacing $\tilde\lambda_{1,k}$ in \cite{stw1503} with $\nabla_k\phi $, $\phi$ in \cite{stw1503} by $\varsigma$ and $F^{k\bar k}u_{k\bar k}$ in \cite{stw1503} by $L(\varphi)$).
After changing these notations, we can repeat the argument in \cite{stw1503} word for word to get
\begin{equation}
\lambda_1\leq CK,
\end{equation}
by replacing $\tilde H_k=0$ in \cite{stw1503} by \eqref{nalbaiphi} and replacing the paragraph between \cite[Inequality (3.53)]{stw1503}(not containing) and \cite[Inequality (3.54)]{stw1503}(containing) by $$\psi'L(\varphi)= \psi'F^{k\bar k}(g_{k\bar k}-\chi_{k\bar k}).$$
\textbf{For the first order estimate}, we need prove
\begin{equation}
\label{thmfirstorderwubian}
\sup_M|\partial u|_g\leq C.
\end{equation}
 We use the blowup argument in \cite{gaborjdg,twjams} originated from \cite{dkajm}. Since  $h$ still uniformly bounded, the argument in \cite{gaborjdg,twcrelle,fenggezhengcmabd1} still works without any modification. We omit the details here.

Given \eqref{equzeroestimate}, \eqref{equ2ndestimate} and \eqref{thmfirstorderwubian}, $C^{2,\gamma}$-estimate for some $0<\gamma<1$ follows from the Evans-Krylov theory \cite{evanscpam1982,krylov1982,trudingertransactionams1983} (see also \cite{twwycvpde}). Differentiating the equations and using the Schauder theory (see for example \cite{gt1998}), we then deduce uniform a priori $C^k$ estimates for all $k\geq 0.$

Uniqueness follows from the maximum principle as in the arguments in Corollary \ref{corunique}.

\end{proof}

\end{document}